\documentclass[10pt]{article}
\textwidth= 5.00in
\textheight= 7.4in
\topmargin = 30pt
\evensidemargin=0pt
\oddsidemargin=55pt
\headsep=17pt
\parskip=.5pt
\parindent=12pt
\font\smallit=cmti10
\font\smalltt=cmtt10

\usepackage{caption}
\usepackage{color}
\usepackage{amssymb,latexsym,amsmath,epsfig,amsthm} 
\usepackage{empheq}
\usepackage{here}
\usepackage{ascmac}
\makeatletter
\usepackage{here}
\usepackage{url}
\usepackage{comment}

\renewcommand\section{\@startsection {section}{1}{\z@}
{-30pt \@plus -1ex \@minus -.2ex}
{2.3ex \@plus.2ex}
{\normalfont\normalsize\bfseries\boldmath}}

\renewcommand\subsection{\@startsection{subsection}{2}{\z@}
{-3.25ex\@plus -1ex \@minus -.2ex}
{1.5ex \@plus .2ex}
{\normalfont\normalsize\bfseries\boldmath}}

\renewcommand{\@seccntformat}[1]{\csname the#1\endcsname. }

\makeatother
\newtheorem{theorem}{Theorem}
\newtheorem{lemma}{Lemma}

\theoremstyle{definition}
\newtheorem{definition}{Definition}[section]
\newtheorem{remark}{Remark}[section]
\newtheorem{problem}{Problem}
\newtheorem{sol}{Solution}
\newtheorem{example}{Example}[section]

\begin{document}

\begin{center}
\uppercase{\bf A Problem of Knot}
\vskip 20pt
{\bf Ryohei Miyadera }\\
{\smallit Keimei Gakuin, Kobe City, Japan}\\
{\tt runnerskg@gmail.com}
\vskip 10pt
{\bf Hikaru Manabe}\\
{\smallit Keimei Gakuin Kobe City, Japan}\\
{\tt urakihebanam@gmail.com}
\vskip 10pt
{\bf Aoi Murakami}\\
{\smallit Keimei Gakuin, Kobe City, Japan}\\
{\tt      }
\vskip 10pt
{\bf Shoma Morimoto}\\
{\smallit Keimei Gakuin, Kobe City, Japan}\\
{\tt      }


\end{center}
\vskip 20pt
\centerline{\smallit Received: , Revised: , Accepted: , Published: } 
\vskip 30pt


\centerline{\bf Abstract}
\noindent

In this article, the authors give the correct answer to the following problem, which is presented in the well-known problem book "CHALLENGING MATHEMATICAL PROBLEMS WITH ELEMENTARY SOLUTIONS"? by A. M. Yaglom and L. M. Yaglom.

There are six long blades of grass with the ends protruding above and below, and you will tie together the six upper ends in pairs and then tie together the six lower ends in pairs.
What is the probability that a ring will be formed when the blades of grass are tied at random in this fashion?

The solution in the above book needs to be corrected, and we will present a correct answer in this article. Therefore, we are the first persons to present a correct?answer to a problem in a book published in the USSR? in 1954.
By following the original idea of this problem book, we present the correct answer without using knowledge of higher knowledge, although we used a very basic knowledge of the Knot theory.

\pagestyle{myheadings} 
\markright{\smalltt   (  )\hfill} 
\thispagestyle{empty} 
\baselineskip=12.875pt 
\vskip 30pt

\section{Introduction}
In this article, the authors give an answer to the following Problem 1. This problem is presented in a well-known problem book "CHALLENGING MATHEMATICAL PROBLEMS WITH ELEMENTARY SOLUTIONS" \cite{problem}. by A. M. Yaglom and l. M. Yaglom. This problem is in page 25.

\begin{problem}\label{problem1}
In certain rural areas of Russia fortunes were once told in the following
way. A girl would hold six long blades of grass in her hand with the
ends protruding above and below; another girl would tie together the six
upper ends in pairs and then tie together the six lower ends in pairs.
If it turned out that the girl had thus tied the six blades of grass into a ring.
this was supposed to indicate that she would get married within a year.
What is the probability that a ring will be formed when the blades of
grass are tied at random in this fashion?
\end{problem}

In page 155 of \cite{problem}, there is the following solution, but this solution is not correct as you see later.
\begin{sol}
The upper ends of the six blades of grass can be joined in pairs in
$5 \times3 \times 1=15$  different ways (the first end can be tied to any of the other
five upper ends; then the first loose upper end can be tied to any of the
other three loose ends; then the two remaining loose ends must be tied
together). There are likewise 15 different ways of joining the lower ends.
Since the way the lower ends are joined is independent of the way the
upper ends are joined, there are a total of  $15 \times 15= 225$ equally likely
possible outcomes to the experiment.
Let us now compute the number of favorable outcomes. Let the
upper ends be connected in any of the 15 possible ways; let, say, the end of
the first blade be tied to the end of the second blade, the third to the fourth,
and the fifth to the sixth. In order for a ring be obtained, it is necessary
that the lower end of the first blade be tied to the lower end of the third,
fourth, fifth, or sixth blade; we thus have four possibilities for the lower
end of the first blade. Further, if the lower end of the first blade is joined
to that of the third blade, then the lower end of the second blade will have
to be joined to that of either the fifth or the sixth blade; here, we have only
two possibilities. After this is done we are left with only two loose ends,
which must be joined to each other. Combining all possibilities, we see
that for each of the 15 ways of joining the upper ends, there are exactly
$4 \times 2 = 8$ ways of joining the lower ends which lead to favorable outcomes
to the experiment. It follows from this that the total number of favorable
outcomes is $15 \times 8 = 120$.
Thus the probability to be computed is $15 \times 15 /(15 \times 15) = 8/15$.
\end{sol}

The probability that was calculated in the solution is for six tied blades to be connected, but
a connected tied six-blade is not always a circle. It can also be a knotted object.

In this article, we present an answer to Problem 1 by deciding in which case the tied blades will be more than one separated object, in which case it will be a circular object, and in which case it will be a knotted object.

As for the probabilities of getting a circle, a knotted object, a tied six-blade with more than one separated parts, 
it is up to the reader of this article to decide, since there are many different ways to decide which outcomes are equally probable.

\begin{remark}
Usually, the word ``ring'' does not include a knotted object, but even
If the word ``ring'' includes a circular object and a knotted object, it is meaningful to study the probability of getting a ring and the probability of getting a knotted object separately.
\end{remark}

\begin{figure}[H]
\begin{minipage}[t]{0.41\textwidth}
\begin{center}
\includegraphics[height=2cm]{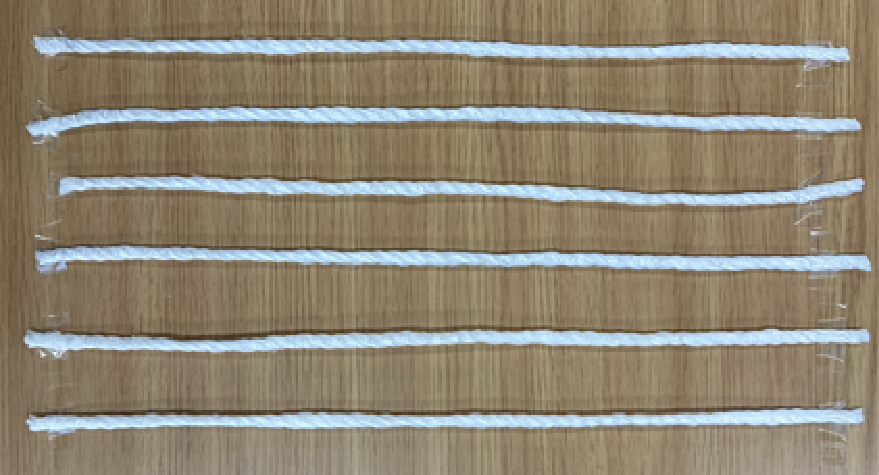}
\caption{six blades}
\label{blade6}
\end{center}
\end{minipage}
\begin{minipage}[t]{0.41\textwidth}
\begin{center}
\includegraphics[height=2cm]{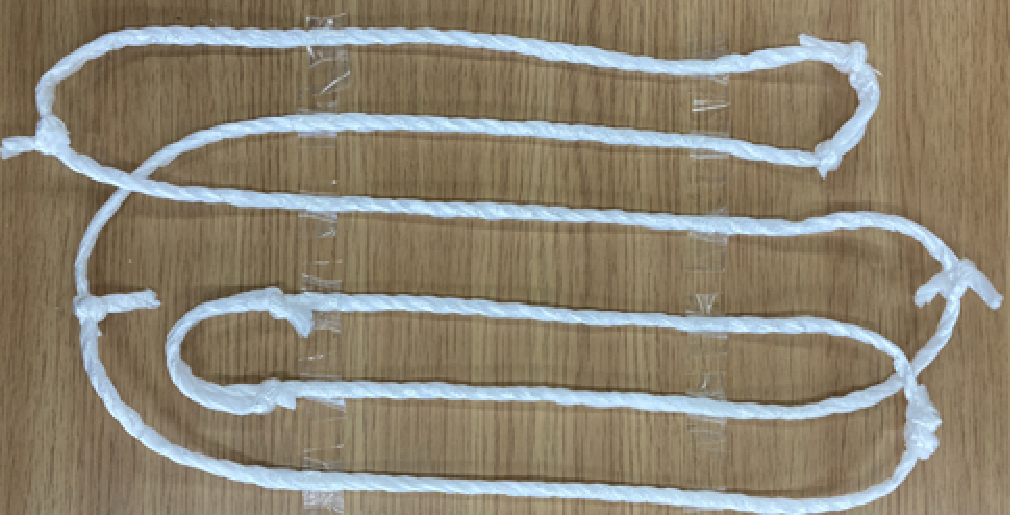}
\caption{tied six blades}
\label{tiedsixblades}
\end{center}
\end{minipage}
\end{figure}
We give numbers $1,2,3,4,5,6$ to six blades in
Figure \ref{blade6}. Then, we have the
graphic in Figure \ref{lines}.
Therefore, we have Blade 1, Blade 2, Blade 3, Blade 4, Blade 5, and Blade 6.
The tied six-blade in Figure \ref{tiedsixblades} can be represented by the 
graphics in Figure \ref{c4c1}.
The tied six-blade in Figure \ref{c4c1} consists of the tied left ends in 
Figure \ref{c4L} and the tied right ends in Figure \ref{c1R}.

\begin{figure}[H]
\begin{minipage}[t]{0.24\textwidth}
\begin{center}
\includegraphics[height=2.5cm]{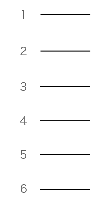}
\captionsetup{labelsep = period}
\caption{line numbers}
\label{lines}
\end{center}
\end{minipage}
\begin{minipage}[t]{0.24\textwidth}
\begin{center}
\includegraphics[height=2cm]{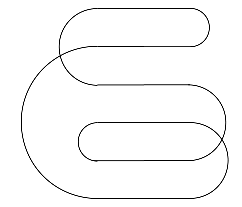}
\caption{c4c1}
\label{c4c1}
\end{center}
\end{minipage}
\begin{minipage}[t]{0.24\textwidth}
\begin{center}
\includegraphics[height=2cm]{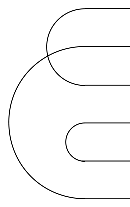}
\caption{c4}
\label{c4L}
\end{center}
\end{minipage}
\begin{minipage}[t]{0.24\textwidth}
\begin{center}
\includegraphics[height=2cm]{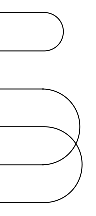}
\caption{c1}
\label{c1R}
\end{center}
\end{minipage}
\end{figure}

\begin{example}\label{crosscases}
There are four cases for the cross points in Figure \ref{c4c1}, and
The tied six-blade in Figure \ref{tiedsixblades} corresponds to Figure \ref{c4c1b}.

\begin{figure}[H]
\begin{minipage}[t]{0.23\textwidth}
\begin{center}
\includegraphics[height=2cm]{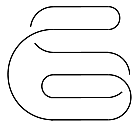}
\caption{Case 1}
\label{c4c1a}
\end{center}
\end{minipage}
\begin{minipage}[t]{0.23\textwidth}
\begin{center}
\includegraphics[height=2cm]{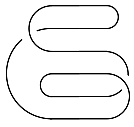}
\caption{Case 2}
\label{c4c1b}
\end{center}
\end{minipage}
\begin{minipage}[t]{0.23\textwidth}
\begin{center}
\includegraphics[height=2cm]{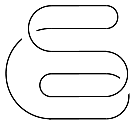}
\caption{Case 3}
\label{c4c1c}
\end{center}
\end{minipage}
\begin{minipage}[t]{0.23\textwidth}
\begin{center}
\includegraphics[height=2cm]{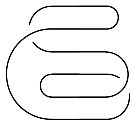}
\caption{Case 4}
\label{c4c1d}
\end{center}
\end{minipage}
\end{figure}
\end{example}

As we see in Example \ref{crosscases}, 
Figure \ref{c4c1}  does not present the relative positions of the two blades at the crossing points, so 
it is not specific enough to describe Figure \ref{c4c1b}, and we need Figure  \ref{tiedsixblades} to describe Figure \ref{c4c1b} precisely, however,
in most cases, we can determine that the tied six-blade will be a circle without knowing the relative positions of two blades at the crossing points.
For example, it is easy to see that the tied six-blade in Figure \ref{c4c1}  can be transformed into a circle by a Reidemeister move.

Therefore, we study figures without the relative positions of two blades at the crossing points until we get a certain number of six-blade
that we need to study the relative positions of two blades at the crossing points.

Next, we define 16 right-. We use the same names for the 16 left-end, too.
Each set of left end and right end has a name and a matrix.
For example, the right end in Figure \ref{a1} has the name $A1$ and a matrix $\left(
\begin{array}{ccc}
 1 & 3 & 5 \\
 2 & 4 & 6 \\
\end{array}
\right)$.
This matrix means that we tie Blade 1 and Blade 2, Blade 3 and Blade 4, and finally, Blade 5 and Blade 6.

\begin{definition}
We have four types of right-end $A1,A2,B1,B2$
\begin{figure}[H]
\begin{minipage}[t]{0.21\textwidth}
\begin{center}
\includegraphics[height=2.5cm]{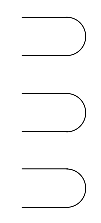}
\caption{A1}
\label{a1}
$\left(
\begin{array}{ccc}
 1 & 3 & 5 \\
 2 & 4 & 6 \\
\end{array}
\right)$
\end{center}
\end{minipage}
\begin{minipage}[t]{0.21\textwidth}
\begin{center}
\includegraphics[height=2.5cm]{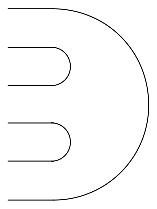}
\caption{A2}
\label{a2}
$\left(
\begin{array}{ccc}
 1 & 2 & 4 \\
 6 & 3 & 5 \\
\end{array}
\right)$
\end{center}
\end{minipage}
\begin{minipage}[t]{0.21\textwidth}
\begin{center}
\includegraphics[height=2.5cm]{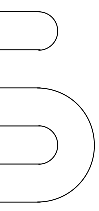}
\caption{B1}
\label{b1}
$\left(
\begin{array}{ccc}
 1 & 3 & 4 \\
 2 & 6 & 5 \\
\end{array}
\right)$
\end{center}
\end{minipage}
\begin{minipage}[t]{0.21\textwidth}
\begin{center}
\includegraphics[height=2.5cm]{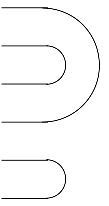}
\caption{B2}
\label{b2}
$\left(
\begin{array}{ccc}
 1 & 2 & 5 \\
 4 & 3 & 6 \\
\end{array}
\right)$
\end{center}
\end{minipage}
\end{figure}
We have four types of right-end $B3,C1,C2,C3$
\begin{figure}[H]
\begin{minipage}[t]{0.21\textwidth}
\begin{center}
\includegraphics[height=2.5cm]{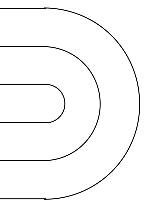}
\caption{B3}
\label{b3}
$\left(
\begin{array}{ccc}
 1 & 2 & 3 \\
 6 & 5 & 4 \\
\end{array}
\right)$
\end{center}
\end{minipage}
\begin{minipage}[t]{0.21\textwidth}
\begin{center}
\includegraphics[height=2.5cm]{C1.eps}
\caption{C1}
\label{c1}
$\left(
\begin{array}{ccc}
 1 & 3 & 4 \\
 2 & 5 & 6 \\
\end{array}
\right)$
\end{center}
\end{minipage}
\begin{minipage}[t]{0.21\textwidth}
\begin{center}
\includegraphics[height=2.5cm]{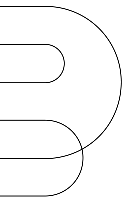}
\caption{C2}
\label{c2}
$\left(
\begin{array}{ccc}
 1 & 2 & 4 \\
 5 & 3 & 6 \\
\end{array}
\right)$
\end{center}
\end{minipage}
\begin{minipage}[t]{0.21\textwidth}
\begin{center}
\includegraphics[height=2.5cm]{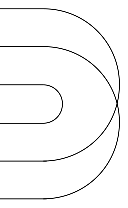}
\caption{C3}
\label{c3}
$\left(
\begin{array}{ccc}
 1 & 2 & 3 \\
 5 & 6 & 4 \\
\end{array}
\right)$
\end{center}
\end{minipage}
\end{figure}  
We have four types of right-end $C4,C5,C6,D1$
\begin{figure}[H]
\begin{minipage}[t]{0.21\textwidth}
\begin{center}
\includegraphics[height=2.5cm]{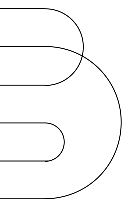}
\caption{C4}
\label{c4}
$\left(
\begin{array}{ccc}
 1 & 2 & 4 \\
 3 & 6 & 5 \\
\end{array}
\right)$
\end{center}
\end{minipage}
\begin{minipage}[t]{0.21\textwidth}
\begin{center}
\includegraphics[height=2.5cm]{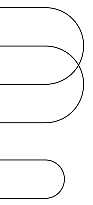}
\caption{C5}
\label{c5}
$\left(
\begin{array}{ccc}
 1 & 2 & 5 \\
 3 & 4 & 6 \\
\end{array}
\right)$
\end{center}
\end{minipage}
\begin{minipage}[t]{0.21\textwidth}
\begin{center}
\includegraphics[height=2.5cm]{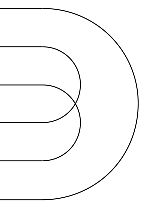}
\caption{C6}
\label{c6}
$\left(
\begin{array}{ccc}
 1 & 2 & 3 \\
 6 & 4 & 5 \\
\end{array}
\right)$
\end{center}
\end{minipage}
\begin{minipage}[t]{0.21\textwidth}
\begin{center}
\includegraphics[height=2.5cm]{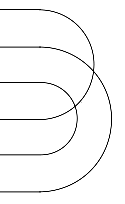}
\caption{D1}
\label{d1}
$\left(
\begin{array}{ccc}
 1 & 2 & 3 \\
 4 & 6 & 5 \\
\end{array}
\right)$
\end{center}
\end{minipage}
\end{figure}
We have four types of right-end $D2,D3,E1,E2$
\begin{figure}[H]
\begin{minipage}[t]{0.21\textwidth}
\begin{center}
\includegraphics[height=2.5cm]{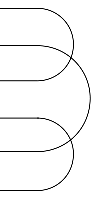}
\caption{D2}
\label{d2}
$\left(
\begin{array}{ccc}
 1 & 2 & 4 \\
 3 & 5 & 6 \\
\end{array}
\right)$
\end{center}
\end{minipage}
\begin{minipage}[t]{0.21\textwidth}
\begin{center}
\includegraphics[height=2.5cm]{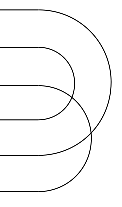}
\caption{D3}
\label{d3}
$\left(
\begin{array}{ccc}
 1 & 2 & 3 \\
 5 & 4 & 6 \\
\end{array}
\right)$
\end{center}
\end{minipage}
\begin{minipage}[t]{0.21\textwidth}
\begin{center}
\includegraphics[height=2.5cm]{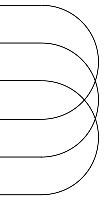}
\caption{E1}
\label{e1}
$\left(
\begin{array}{ccc}
 1 & 2 & 3 \\
 4 & 5 & 6 \\
\end{array}
\right)$
\end{center}
\end{minipage}
\begin{minipage}[t]{0.21\textwidth}
\begin{center}
\includegraphics[height=2.5cm]{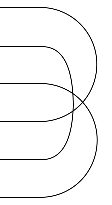}
\caption{E2}
\label{e2}
$\left(
\begin{array}{ccc}
 1 & 2 & 3 \\
 4 & 5 & 6 \\
\end{array}
\right)$
\end{center}
\end{minipage}
\end{figure}
\end{definition}

Throughout this article, we denote a  tied six-blade by the name of the left-end and the name of the right-end.

\begin{example}
The set of tied six blades in figures \ref{a1c5}, \ref{a1d1} and \ref{c3d3} are denoted by $A1C5$, $A1D1$ and 
$C3D3$.

\begin{figure}[H]
\begin{minipage}[t]{0.21\textwidth}
\begin{center}
\includegraphics[height=2cm]{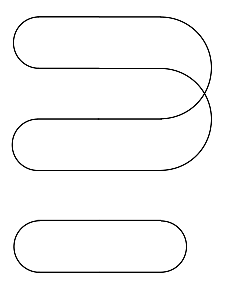}
\caption{A1C5}
\label{a1c5}
\end{center}
\end{minipage}
\begin{minipage}[t]{0.21\textwidth}
\begin{center}
\includegraphics[height=2cm]{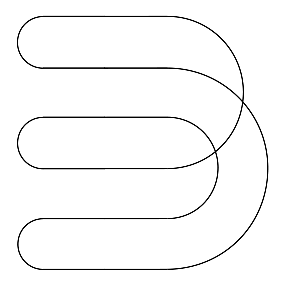}
\caption{A1D1}
\label{a1d1}
\end{center}
\end{minipage}
\begin{minipage}[t]{0.21\textwidth}
\begin{center}
\includegraphics[height=2cm]{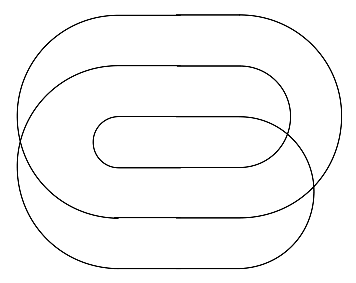}
\caption{C3D3}
\label{c3d3}
\end{center}
\end{minipage}
\end{figure}
\end{example}

\begin{definition}\label{defoftype}
$(i)$ A tied six-blade is said to be Type $I$ if
it consists of two separated parts.\\
$(ii)$  A tied six-blade is said to be Type $II$ if it does not consist of two separated parts.
\end{definition}

\begin{lemma}\label{samecolumn}
If the matrix of the left end and the matrix of the right end have the same column in common,
then the set of six blades is type $I$.
\end{lemma}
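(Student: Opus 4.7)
The plan is a short definition chase: read each matrix column as a pairing instruction, observe that a shared column ties the same two blades at both ends, and conclude that those two blades form a component disjoint from the other four.

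First I would spell out what a column means. A column $\binom{a}{b}$ of the right-end matrix records that the right tip of blade $a$ is tied to the right tip of blade $b$, and the analogous statement holds for the left-end matrix. Hence a column $\binom{a}{b}$ appearing in both the left-end matrix and the right-end matrix says precisely that blades $a$ and $b$ are glued to one another at \emph{both} of their ends.

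Next I would exhibit two separated parts. Blades $a$ and $b$, joined at both endpoints, form a closed loop $L$, and the four tips of these two blades are now entirely consumed inside $L$. The remaining two columns of the left-end matrix then give a perfect pairing of the four left tips belonging to $S=\{1,\dots,6\}\setminus\{a,b\}$, and the same holds on the right. Therefore the four blades indexed by $S$ are tied exclusively to one another and form a second object $M$, disjoint from $L$.

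To conclude, the tied six-blade decomposes as $L\sqcup M$, so by Definition \ref{defoftype} it is of Type $I$. I foresee no real obstacle in this argument; the only point deserving a sentence of care is the remark that, because each tip of blade $a$ or $b$ is already used by the tie forming $L$, no further tie can connect $L$ to $M$, and hence the two pieces are genuinely separated.
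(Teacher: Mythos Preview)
Your proof is correct and follows essentially the same approach as the paper: the paper's proof is a one-line appeal to Definition~\ref{defoftype} and the meaning of the matrices, and your argument is simply the explicit unpacking of that appeal. The only minor remark is that your piece $M$ could itself break into further components, but since Type~$I$ only requires the tied six-blade not to be connected, exhibiting the split $L\sqcup M$ already suffices.
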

\begin{proof}
It is direct from Definition \ref{defoftype} and the definition of matrix attached to the left end and the right end.
\end{proof}

There are $16$ right ends  and $16$ left ends, so there are $256$ tied six-blades to be studied.
Next aim is to determine which of these $256$ tied six-blades are Type $I$ and which are Type $II$.

As you see in Theorem \ref{separate}, the tied six-blade $A1C1$  and $C1A1$  are Type $I$, but 
it is enough to state that $A1C1$ is Type $I$, because $A1C1$  is the left and the right mirror image of $C1A1$. 

In the remainder of this article, we often treat one of two tied six-blades that are mutually the left and the right mirror image of each other.

\begin{theorem}\label{separate}
Sets of six blades
$A1A1,A1B1,A1B2,A1B3,A1C1,A1C3,A1C5$\\
$A2A2,A2B1,A2B2,A2B3,A2C2,A2C4,A2C6$\\
$B1B1,B1C1,B1C4,B1D3,B1E1,B1E2$\\
$B2B2,B2C2,B2C5,B2D1,B2E1,B2E2$\\
$B3B3,B3C3,B3C6,B3E1,B3E2$\\
$C1C1,C1C2,C1C6,C1D1,C1D2$\\
$C2C2,C2C3,C2D2,C2D3$\\
$C3C3,C3C4,C3C5,C3D1,C3D3$\\
$C4C4,C4C5,C4D1,C4D2$\\
$C5C5,C5C6,C5C7,C5D2,C5D3$\\
$C6C6,C6D1,C6D3$\\
$D1D1,D1E1,D1E2$\\
$D2D2,D2E1,D2E2$\\
$D3D3,D3E1,D3E2$\\
$E1E1,E1E2$\\
$E2E2$
are type $I$.
\end{theorem}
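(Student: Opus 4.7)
The overall strategy is a uniform, one-shot application of Lemma \ref{samecolumn}. Each of the sixteen labels $A1,\ldots,E2$ carries a $2 \times 3$ matrix whose three columns record which pairs of blades are tied together at that end. Reading a column $\binom{a}{b}$ as the unordered pair $\{a,b\}$, the lemma says that whenever the left-end matrix and the right-end matrix share at least one such pair, the two blades in that common pair form a closed loop separate from the remaining four, so the tied six-blade has more than one component and is Type~$I$.

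The plan is therefore to extract, once and for all, the three-element set of unordered pairs corresponding to each label from the Definition, for instance $A1 \leftrightarrow \{\{1,2\},\{3,4\},\{5,6\}\}$, $C1 \leftrightarrow \{\{1,2\},\{3,5\},\{4,6\}\}$, $D1 \leftrightarrow \{\{1,4\},\{2,6\},\{3,5\}\}$, and similarly for the other thirteen labels. Then for every configuration $XY$ appearing in the statement I would exhibit a witness pair lying in both pair-sets; for example $A1B1$ and $A1C1$ share $\{1,2\}$, $A1B3$ and $B3C3$ share $\{3,4\}$, $C3D1$ shares $\{2,6\}$, $B3E1$ shares $\{2,5\}$, and $E1E2$ shares all three pairs. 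Once a witness is displayed, Lemma \ref{samecolumn} delivers Type~$I$ immediately.

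The main obstacle is clerical rather than conceptual: over sixty configurations appear in the statement, so I would organise the verifications into a single table indexed by the left-end label, recording for each listed configuration one witness column common to both matrices. The remark preceding the theorem, which identifies $XY$ with the left--right mirror image of $YX$, shows that it suffices to check one representative from each mirror class, roughly halving the workload. No geometric reasoning beyond Lemma \ref{samecolumn} is needed, because ``has a common column'' is a purely combinatorial condition that can be verified by inspection of the sixteen pair-sets extracted above; the only risk is a transcription slip in the table, which careful cross-checking against the figures in the Definition will catch.
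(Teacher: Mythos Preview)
Your proposal is correct and follows exactly the paper's own argument: the paper's proof is the single sentence ``This fact is direct from Lemma~\ref{samecolumn} and matrices of each left and right end,'' and your plan of reading off the three unordered pairs from each label's matrix and exhibiting a common witness column for every listed configuration is precisely the verification that sentence summarizes. Your organisation into a table and use of the mirror-image remark to halve the work are sensible elaborations, but there is no difference in method.
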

\begin{proof}
This fact is direct from Lemma \ref{samecolumn} and matrices of each left and right end.
\end{proof}

The following Table shows which  six-blade is Type 1 and which is Type 2.  The symbol $N$ stands for "not connected", i.e., it is Type 1, and 
the symbol $C$ stands for "connected", i.e., it is Type 2.

\begin{figure}[H]
\begin{center}
\includegraphics[height=5cm]{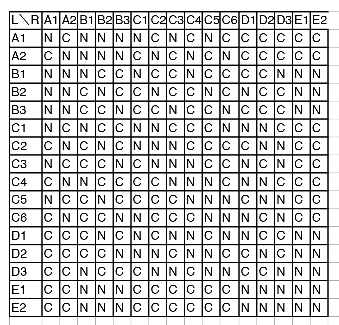}
\captionsetup{labelsep = period}
\caption{L-positions}
\label{leel}
\end{center}
\end{figure}
\begin{theorem}\label{cross12}
A knot that has less than or equal to two crossings is a trivial Knot, i.e., a circle.
\end{theorem}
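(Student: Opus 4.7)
The plan is to proceed by case analysis on the number $c$ of crossings, where $c \in \{0,1,2\}$, and in each case exhibit an explicit sequence of Reidemeister moves (or an elementary planar isotopy) that reduces the diagram to a crossing-free loop, which is the standard diagram of the trivial knot.

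First I would dispose of the base cases. If $c=0$, the diagram is a simple closed curve in the plane, which by the Jordan curve theorem bounds a disc, so the knot is already a circle. If $c=1$, the single crossing forces a portion of the strand to form a small loop that is attached to the rest of the diagram at the crossing; a single Reidemeister move of type I removes this loop and leaves a crossing-free diagram, which is the trivial knot by the $c=0$ case.

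The main work is the case $c=2$. Here I would classify the two crossings according to how the four arcs emanating from them are connected along the knot. If either crossing is a ``kink'' (the two strands meeting at that crossing come from a loop bounded by the crossing itself), a Reidemeister I move reduces the diagram to one with at most one crossing, and we appeal to the previous cases. Otherwise each crossing involves two distinct arcs of the knot, and the two crossings must therefore share both strands. Up to the choice of over/under at each crossing, the resulting diagram is either a Reidemeister II configuration (opposite over/under information at the two crossings), in which a single Reidemeister II move removes both crossings, or a ``would-be'' Hopf-link style configuration with parallel over/under information. In the latter sub-case, because we have a single component and not a two-component link, tracing the strand around forces the diagram to again be untwistable by a planar isotopy into a standard kink, which is then removed by Reidemeister I and II moves.

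The step I expect to be the main obstacle is the careful enumeration in the $c=2$ case: I must be sure that no diagram with two crossings and one component escapes the Reidemeister I / Reidemeister II classification above. To make this airtight I would draw the underlying $4$-valent planar graph with two vertices, list its possible abstract shapes (there are only a few, since the Euler characteristic and the single-component condition are restrictive), and verify for each that one of the moves above applies. Once this enumeration is complete, the theorem follows immediately, since each reduction strictly decreases the number of crossings and terminates at $c=0$, i.e., a circle.
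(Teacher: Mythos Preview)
Your proposal is sound and in fact goes well beyond what the paper does: the paper's entire proof of this theorem is the single sentence ``This is a well-known fact. See \cite{knotatlas}.'' There is thus no argument in the paper to compare against; the authors simply appeal to the standard knot tables, which begin at three crossings with the trefoil.

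Your route---case analysis on $c\in\{0,1,2\}$ with explicit Reidemeister reductions---is the natural elementary argument and is correct in outline. The $c=0$ and $c=1$ cases are clean. In the $c=2$ case your dichotomy (either some crossing is a kink, or the two crossings bound a bigon) is right in spirit, but your handling of the ``parallel over/under'' sub-case is the thinnest step: you assert that single-componentness forces the diagram to untwist without quite saying how. You already identify this as the main obstacle and propose the correct fix, namely enumerating the connected $4$-valent planar shadows on two vertices (there are only a handful once one uses $E=2V$ and $F=V+2$, which forces a monogon or bigon face) and checking each over/under assignment. Carrying that out would make the proof complete and self-contained, which is arguably more in keeping with the paper's stated goal of avoiding higher machinery than the paper's own bare citation.
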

\begin{proof}
This is a well-known fact. See \cite{knotatlas}.
\end{proof}

\begin{theorem}\label{trivial}
$(i)$ A1A2, A1C2, A1C4, A1C6, A1D1, A1D2, A1D3, A2C1, A2C3, A2C5, A2D1, A2D2, A2D3, B1B2, B1B3, B1C2, B1C3, B1C5, B1C6, B1D1, B1D2, B2B3, B2C1, B2C3, B2C4, B2C6, B2D2, B2D3 are trivial knots, i.e., circles.\\
$(ii)$ C1C3, C1C4, C1C5, C2C4, C2C5, C2C6,C3C5, C3C6,C4C6 are trivial knots.
\end{theorem}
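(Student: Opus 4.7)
The plan is to show that every configuration listed is a single closed curve (Type II) whose standard diagram has at most two crossings, and then to invoke Theorem \ref{cross12}. The key observation is that in a tied six-blade $XY$ the left-end arcs lie strictly to the left of the six vertical blade segments and the right-end arcs strictly to their right, so the two arc systems are disjoint in the plane. Consequently the crossing number of the diagram of $XY$ is additive: it equals the crossing number of $X$ plus the crossing number of $Y$.

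First I would read off from the defining figures the crossing number of each of the sixteen end types. An inspection shows that $A1, A2, B1, B2, B3$ are drawn as planar matchings with no crossings, that $C1,\ldots,C6$ each contribute exactly one crossing, that $D1, D2, D3$ each contribute two, and that $E1, E2$ each contribute three. With this data the total crossing count of $XY$ is immediate from its two end types.

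Next I would verify that every pair in the theorem has total crossing count at most two. In part $(i)$ every left end is drawn from $\{A1, A2, B1, B2\}$ (contributing $0$) and every right end is of type $A$, $B$, $C$, or $D$ (contributing $0, 0, 1, 2$); the total is $\leq 2$. In part $(ii)$ every pair has the form $C_i C_j$ and contributes $1+1=2$. At the same time each listed pair must be confirmed to be Type II rather than Type I; I would do this by composing the two matching involutions and checking the resulting permutation on $\{1,\ldots,6\}$ is a single $6$-cycle, or equivalently by tracing one blade through the diagram and checking it returns to the start only after visiting all six.

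Once both facts are in place, Theorem \ref{cross12} applies uniformly: any knot with at most two crossings is the trivial knot, regardless of the over/under choice at the crossings. The principal obstacle is not knot-theoretic but combinatorial --- tallying crossings in the sixteen end diagrams and checking the Type II condition for each of the listed pairs --- but additivity of the crossing count makes every case routine. Configurations involving a $D$- or $E$-end combined with another end of positive crossing number produce three or more crossings and thus fall outside the scope of Theorem \ref{cross12}; these lie beyond the present statement and must be addressed by a genuine knot-theoretic argument (e.g.\ Reidemeister moves or invariants).
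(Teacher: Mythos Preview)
Your proposal is correct and follows essentially the same route as the paper: count the crossings contributed by each end type ($A,B$ contribute $0$, $C$ contribute $1$, $D$ contribute $2$), use additivity of the left- and right-end crossing counts to bound the total by two, and then invoke Theorem~\ref{cross12}. You are somewhat more explicit than the paper in justifying the additivity and in flagging the need to verify connectedness (Type~II), but the underlying argument is identical.
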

\begin{proof}
$(i)$ $A1,A2,B1,B2,B3$ has no cross point, and each of $A2,B2,B3,C1,C2,C3,$ \\
$C4,C5,C6,D1,D2,D3$ has two cross points at most.
Therefore each of the tied six-blade  has two cross points at most, and hence by Theorem \ref{cross12},
they are trivial knots, i.e., circles.
$(i)$ Each of $C1,C2,C3,C4,C5,C6$ has one cross point, and hence each of the tied six-blade has two crossing, and hence by Theorem \ref{cross12},
they are trivial knots, i.e., circles.
\end{proof}

By Theorem \ref{separate} and Theorem \ref{trivial}, what remain to be studied are the following group of tied six-blades.
$A1E1, A1E2,A2E1, A2E2, C1D3, C1E1, C1E2, C2D1, C2E1$ \\
$, C2E2,C3D2, C3E1,C3E2,C4D3, C4E1, C4E2,C5D1, C5E1, C5E2,C6D2,$ \\
 $C6E1, C6E2,D1D2,D1D3,D2D3.$


\begin{figure}[H]
\begin{minipage}[t]{0.21\textwidth}
\begin{center}
\includegraphics[height=1.5cm]{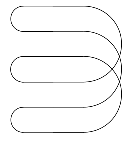}
\caption{A1E1}
\label{a1e1}
\end{center}
\end{minipage}
\begin{minipage}[t]{0.21\textwidth}
\begin{center}
\includegraphics[height=1.5cm]{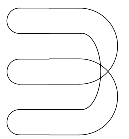}
\caption{A1E2}
\label{a1e2}
\end{center}
\end{minipage}
\begin{minipage}[t]{0.21\textwidth}
\begin{center}
\includegraphics[height=1.5cm]{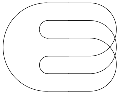}
\caption{A2E1}
\label{a2e1}
\end{center}
\end{minipage}
\begin{minipage}[t]{0.21\textwidth}
\begin{center}
\includegraphics[height=1.5cm]{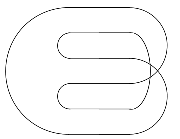}
\caption{A2E2}
\label{a2e2}
\end{center}
\end{minipage}
\end{figure}

\begin{figure}[H]
\begin{minipage}[t]{0.21\textwidth}
\begin{center}
\includegraphics[height=1.5cm]{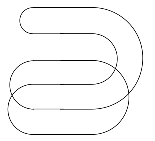}
\caption{C1D3}
\label{d3c1}
\end{center}
\end{minipage}
\begin{minipage}[t]{0.21\textwidth}
\begin{center}
\includegraphics[height=1.5cm]{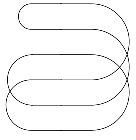}
\caption{C1E1}
\label{cie1}
\end{center}
\end{minipage}
\begin{minipage}[t]{0.21\textwidth}
\begin{center}
\includegraphics[height=1.5cm]{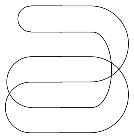}
\caption{C1E2}
\label{e2c1}
\end{center}
\end{minipage}
\begin{minipage}[t]{0.21\textwidth}
\begin{center}
\includegraphics[height=1.5cm]{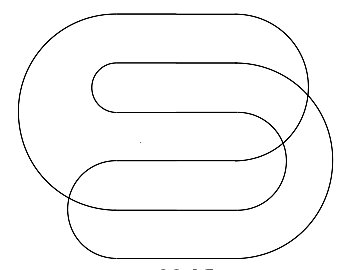}
\caption{C2D1}
\label{d1c2}
\end{center}
\end{minipage}
\end{figure}

\begin{figure}[H]
\begin{minipage}[t]{0.21\textwidth}
\begin{center}
\includegraphics[height=1.5cm]{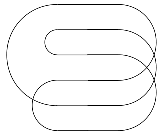}
\caption{C2E1}
\label{e1c2}
\end{center}
\end{minipage}
\begin{minipage}[t]{0.21\textwidth}
\begin{center}
\includegraphics[height=1.5cm]{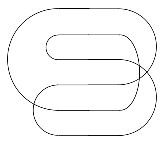}
\caption{C2E2}
\label{e2c2}
\end{center}
\end{minipage}
\begin{minipage}[t]{0.21\textwidth}
\begin{center}
\includegraphics[height=1.5cm]{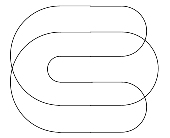}
\caption{C3D2}
\label{d2c3}
\end{center}
\end{minipage}
\begin{minipage}[t]{0.21\textwidth}
\begin{center}
\includegraphics[height=1.5cm]{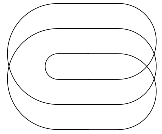}
\caption{C3E1}
\label{e1c3}
\end{center}
\end{minipage}
\end{figure}

\begin{figure}[H]

\begin{minipage}[t]{0.21\textwidth}
\begin{center}
\includegraphics[height=1.5cm]{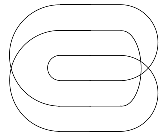}
\caption{C3E2}
\label{e2c3}
\end{center}
\end{minipage}
\begin{minipage}[t]{0.21\textwidth}
\begin{center}
\includegraphics[height=1.5cm]{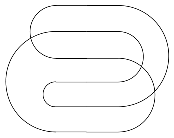}
\caption{C4D3}
\label{d3c4}
\end{center}
\end{minipage}
\begin{minipage}[t]{0.21\textwidth}
\begin{center}
\includegraphics[height=1.5cm]{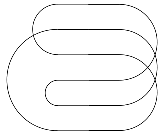}
\caption{C4E1}
\label{w1c4}
\end{center}
\end{minipage}
\begin{minipage}[t]{0.21\textwidth}
\begin{center}
\includegraphics[height=1.5cm]{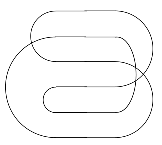}
\caption{C4E2}
\label{e2c4}
\end{center}
\end{minipage}
\end{figure}

\begin{figure}[H]
\begin{minipage}[t]{0.21\textwidth}
\begin{center}
\includegraphics[height=1.5cm]{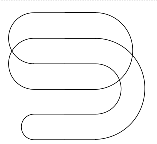}
\caption{C5D1}
\label{c5d1}
\end{center}
\end{minipage}
\begin{minipage}[t]{0.21\textwidth}
\begin{center}
\includegraphics[height=1.5cm]{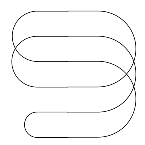}
\caption{C5E1}
\label{e1c5}
\end{center}
\end{minipage}
\begin{minipage}[t]{0.21\textwidth}
\begin{center}
\includegraphics[height=1.5cm]{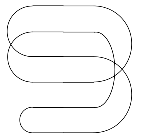}
\caption{C5E2}
\label{e2c5}
\end{center}
\end{minipage}
\begin{minipage}[t]{0.21\textwidth}
\begin{center}
\includegraphics[height=1.5cm]{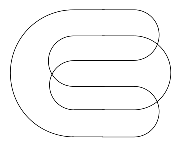}
\caption{C6D2}
\label{d2c6}
\end{center}
\end{minipage}
\end{figure}

\begin{figure}[H]
\begin{minipage}[t]{0.21\textwidth}
\begin{center}
\includegraphics[height=1.5cm]{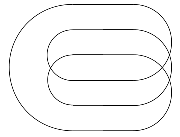}
\caption{C6E1}
\label{e1c6}
\end{center}
\end{minipage}
\begin{minipage}[t]{0.21\textwidth}
\begin{center}
\includegraphics[height=1.5cm]{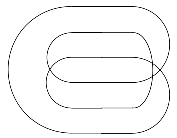}
\caption{C6E2}
\label{e2c6}
\end{center}
\end{minipage}
\begin{minipage}[t]{0.21\textwidth}
\begin{center}
\includegraphics[height=1.5cm]{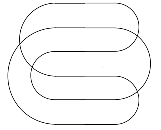}
\caption{D1D2}
\label{d1d2}
\end{center}
\end{minipage}
\begin{minipage}[t]{0.21\textwidth}
\begin{center}
\includegraphics[height=1.5cm]{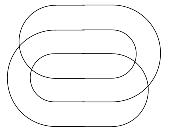}
\caption{D1D3}
\label{d1d3}
\end{center}
\end{minipage}
\end{figure}

\begin{figure}[H]
\begin{minipage}[t]{0.21\textwidth}
\begin{center}
\includegraphics[height=1.5cm]{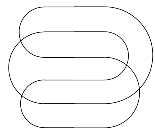}
\caption{D2D3}
\label{d3d2}
\end{center}
\end{minipage}
\end{figure}

Now we have the above 25 tied six-blades. Among them, there are pairs that are vertically symmetrical. 
\begin{lemma}
$(i)$ $C1E1$ is vertically symmetrical to $C5E1$.\\
$(ii)$  $C1D3$ is vertically symmetrical to $C5D1$.\\
$(iii)$  $C2D1$ is vertically symmetrical to $C4D3$.\\
$(iv)$  $D1D2$ is vertically symmetrical to $D2D3$.\\
$(v)$  $C2E1$ is vertically symmetrical to $C4E1$.\\
$(vi)$  $C1E2$ is vertically symmetrical to $C5E2$.\\
$(v)$  $C2E2$ is vertically symmetrical to $C4E2$.
\end{lemma}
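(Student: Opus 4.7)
The plan is to turn the geometric notion of vertical symmetry into an algebraic condition on the matrices of the left and right ends. Reflecting a tied six-blade across a horizontal axis keeps left ends on the left and right ends on the right, while reversing the order of the six blades; that is, it induces the permutation $\sigma: i\mapsto 7-i$ on blade labels. For any endpoint $X$ with matrix $M_X$, its vertical reflection therefore has matrix obtained from $M_X$ by substituting every entry $k$ with $7-k$, then swapping entries within columns and reordering columns so that each column has its smaller entry on top and the top row is increasing.

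First I would tabulate the action of $\sigma$ on every endpoint type that appears in the lemma. A direct substitution in the matrices given in the definitions yields the involution
\[
C1\leftrightarrow C5,\quad C2\leftrightarrow C4,\quad C3\mapsto C3,\quad C6\mapsto C6,\quad D1\leftrightarrow D3,\quad D2\mapsto D2,
\]
and for the $E$ endpoints, since the matrices of $E1$ and $E2$ coincide, I would supplement the matrix calculation with a look at Figures \ref{e1} and \ref{e2} to confirm that the vertical flip sends $E1$ to $E1$ and $E2$ to $E2$ rather than interchanging them.

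With this table in hand, each of the seven assertions of the lemma reduces to substitution. Items (i), (v), (vi), (vii) follow from $\sigma(C1)=C5$, $\sigma(C2)=C4$, and $\sigma(E1)=E1$, $\sigma(E2)=E2$; items (ii) and (iii) follow from the same equalities together with $\sigma(D1)=D3$ and $\sigma(D3)=D1$. The one case requiring care is (iv): applying $\sigma$ to $D1D2$ yields $D3D2$ rather than $D2D3$. Here I would invoke the left--right mirror convention introduced just before Theorem \ref{separate}, under which $D3D2$ and $D2D3$ represent the same class of tied six-blade, to close the argument.

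The main obstacle is not any single computation but the bookkeeping distinction between the pairing data recorded by the matrices and the crossing data recorded by the pictures. Whenever two endpoints share a matrix, as with $E1$ and $E2$, matrix substitution alone is insufficient and a direct visual check of the crossings is needed to rule out that the vertical flip swaps them. Once this check is carried out the lemma is a routine verification against the list above.
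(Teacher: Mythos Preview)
Your argument is correct and considerably more explicit than the paper's own proof, which reads in its entirety ``These are direct from shapes of tied six-blades.'' Where the paper simply asks the reader to inspect the pictures, you encode the vertical reflection as the relabelling $\sigma\colon i\mapsto 7-i$ on blade numbers and read off its action on endpoint types from their matrices, falling back on the figures only for the genuinely ambiguous pair $E1,E2$. This buys verifiability without the figures at the price of a little bookkeeping, and it makes transparent \emph{why} the seven pairs in the statement are the ones that occur.

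Two small points are worth making explicit in your write-up. First, the table $C1\leftrightarrow C5$, $C2\leftrightarrow C4$, $D1\leftrightarrow D3$, $D2\mapsto D2$ tacitly uses that among the sixteen listed endpoints every pairing other than $\{1,4\},\{2,5\},\{3,6\}$ is realised by exactly one drawing; stating this once would explain why the matrix substitution alone determines the image in every case except the $E$ case, and in particular why no separate crossing check is needed for $D2$. Second, your handling of item~(iv) is right on target: the vertical flip of $D1D2$ is literally $D3D2$, not $D2D3$, and it is only the paper's standing convention of identifying left--right mirror images (announced just before Theorem~\ref{separate}) that converts one into the other. The paper's one-line proof elides this entirely, so your explicit appeal to the mirror convention is an improvement rather than a gap.
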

\begin{proof}
These are direct from shapes of tied six-blades.
\end{proof}

\begin{lemma}
$(i)$ $A1E1$ is a trivial knot, i.e., a circle.\\
$(ii)$  $A2E2$ is a trivial knot.
\end{lemma}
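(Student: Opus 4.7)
The plan is to reduce each of $A1E1$ and $A2E2$ to a diagram with at most two crossings, so that Theorem~\ref{cross12} immediately implies triviality. As a first step I would verify that both tied six-blades are of Type~II (single loops) by tracing the matrices: for $A1E1$, the right pairing $\{1,2\},\{3,4\},\{5,6\}$ from $A1$ combined with the left pairing $\{1,4\},\{2,5\},\{3,6\}$ from $E1$ produces the single cycle $1\to 2\to 5\to 6\to 3\to 4\to 1$, visiting every blade exactly once; an analogous check settles $A2E2$.

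Next I would tally the crossings appearing in Figures~\ref{a1e1} and \ref{a2e2}. The proof of Theorem~\ref{trivial} already notes that $A1$ has no crossings and $A2$ has at most two, so the remaining crossings live on the $E$-end. For each tied six-blade the total count is small but generically exceeds the bound of Theorem~\ref{cross12}, so a genuine simplification is needed before that theorem can be invoked.

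The core of the argument is a sequence of Reidemeister~II cancellations that exploits the simple cup structure of the right end. Since each cup of $A1$ (or $A2$) pairs two endpoints by a planar arc, any $E$-arc that crosses both strands emanating from a given cup can be slid around the cup by an ambient isotopy, simultaneously removing the two crossings. I would perform one or two such slides to reduce the crossing count to at most two, at which point Theorem~\ref{cross12} gives the conclusion. Alternatively, one could exhibit an explicit planar isotopy of the closed curve traced in the first step to a round circle, using the freedom afforded by the cups on the right to push arcs across.

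The main obstacle will be verifying that the specific over/under decoration in Figures~\ref{a1e1} and \ref{a2e2} matches the hypothesis of Reidemeister~II, namely the \emph{over then under} pattern along the slid strand. If instead two crossings of the same sign appear, the slide becomes a Reidemeister~III move rather than a cancellation, and one must insert an additional rearrangement before the pair can be removed. Reading the decoration off the figures, and confirming that the R2 pattern is actually present for $A1E1$ and $A2E2$, is therefore the delicate, case-specific step that distinguishes these two trivial cases from the potentially knotted ones still to be analysed.
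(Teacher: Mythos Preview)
Your plan is essentially the paper's own argument: reduce the diagram to at most two crossings and then invoke Theorem~\ref{cross12}. The paper's proof is a single sentence (``By a Reidemeister move, we can reduce the number of crossings of $A1E1$ and $A2E2$ less than three, and hence by Theorem~\ref{cross12} these are trivial knots''), so your version is simply a more detailed execution of the same idea, with the extra (harmless) verification that the matrices give a single cycle.

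One point to correct in your write-up: in the paper's framework the figures $A1E1$ and $A2E2$ are \emph{shadows}, i.e.\ they carry no over/under information at all (see the discussion after Example~\ref{crosscases}). The lemma is therefore the assertion that \emph{every} choice of over/under at the crossings yields the unknot, not that some specific decoration happens to be trivial. So the ``main obstacle'' you describe---checking that a particular decoration matches the Reidemeister~II pattern---is misstated. What actually has to be shown is that the simplification works uniformly for all $2^{k}$ decorations. Your cup-sliding manoeuvre is in fact well-suited to this: pushing a strand in $3$-space around a simple planar cup removes the two crossings whatever their over/under types, so the worry about ``same sign'' crossings forcing an RIII detour does not arise here. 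Once you phrase the reduction this way, the argument goes through exactly as the paper intends.
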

\begin{proof}
By a Reidemeister move, we can reduce the number of crossings of $A1E1$ and $A2E2$ less than three, and hence by Lemma \ref{cross12},
these are trivial knots, i.e., circles.
\end{proof}

\begin{figure}[H]
\begin{minipage}[t]{0.23\textwidth}
\begin{center}
\includegraphics[height=2cm]{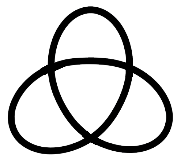}
\caption{three cross points}
\label{threecross}
\end{center}
\end{minipage}
\begin{minipage}[t]{0.23\textwidth}
\begin{center}
\includegraphics[height=2cm]{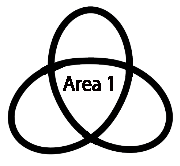}
\caption{The central area}
\label{central}
\end{center}
\end{minipage}
\begin{minipage}[t]{0.45\textwidth}
\begin{center}
\includegraphics[height=2cm]{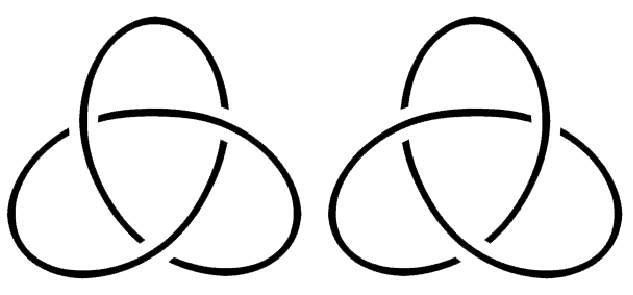}
\caption{trefoil}
\label{trefoil}
\end{center}
\end{minipage}
\end{figure}

\begin{lemma}\label{lemmafortrefoil}
There are 8 cases of relative positions of crossings in Figure \ref{threecross}, and two of them are Knots in Figure \ref{trefoil}, and the other six cases are trivial knots, i.e., circles.
\end{lemma}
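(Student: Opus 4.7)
The plan is to enumerate all $2^{3}=8$ choices of over/under at the three crossings in Figure \ref{threecross} and classify each resulting diagram. I would fix a labeling $X_{1},X_{2},X_{3}$ of the three crossings in the central region (Figure \ref{central}) and record a configuration as a triple of signs $(\varepsilon_{1},\varepsilon_{2},\varepsilon_{3})\in\{+,-\}^{3}$, where $\varepsilon_{i}$ records which strand passes over at $X_{i}$. This gives exactly 8 diagrams to analyze.

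Next, I would observe that the two configurations $(+,+,+)$ and $(-,-,-)$ — in which the strand that goes over is the same one consistently around the central region — produce exactly the standard alternating diagram of the trefoil (and its mirror image) shown in Figure \ref{trefoil}. This identification is made directly by redrawing the diagram and recognizing it as the familiar three-crossing braid closure that defines the trefoil. These two cases account for the knotted outcomes claimed in the statement.

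For each of the remaining six configurations, at least two of the three signs disagree, so there exists an adjacent pair of crossings, say $X_{i}$ and $X_{i+1}$, with opposite over/under types. The two strands between these crossings then form a bigon whose two arcs cross consistently in the pattern required for a Reidemeister II move; applying this move eliminates both $X_{i}$ and $X_{i+1}$ simultaneously. The resulting diagram has at most one crossing, and Theorem \ref{cross12} then immediately gives that the knot is trivial, i.e., a circle.

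The step I expect to be the main obstacle is the bookkeeping in the Reidemeister~II argument: I must verify, by looking at the actual geometric layout of Figure \ref{threecross}, that in each of the six mixed-sign configurations a genuine bigon appears between some pair of adjacent crossings with the over/under pattern needed for Reidemeister~II (as opposed to an unmatched pair that would only allow a Reidemeister~III slide). A careful case-by-case check on the central region in Figure \ref{central}, exploiting the two-fold symmetry of the diagram to reduce the six mixed cases to essentially three, should handle this cleanly.
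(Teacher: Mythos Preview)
Your proposal is correct and in fact considerably more detailed than the paper's own treatment. The paper's proof of this lemma reads, in its entirety, ``We can prove this by illustrating eight cases with easy so we omit the proof.'' In other words, the authors simply assert that a direct pictorial check of all eight over/under assignments suffices and do not carry it out on the page.

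Your approach is genuinely different in spirit: rather than brute pictorial enumeration, you organize the eight cases by sign pattern, identify the two alternating assignments as the trefoil and its mirror, and then argue uniformly that each of the six non-alternating assignments admits a Reidemeister~II simplification to at most one crossing, whence Theorem~\ref{cross12} finishes. This is cleaner and more informative than drawing eight pictures, and it explains \emph{why} exactly two of the eight are knotted. The caveat you yourself flag is the right one: you must check that your sign convention is set up so that ``all signs equal'' really is the alternating case, and that in each mixed-sign case the relevant bigon in the trefoil shadow (there are three such lobes, one between each pair of crossings) genuinely carries the over--over/under--under pattern required for Reidemeister~II rather than the alternating pattern. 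With the threefold symmetry of Figure~\ref{threecross} this reduces to a single local check, so the obstacle you anticipate is mild. Either approach is adequate for the lemma; yours has the advantage of giving a reason, the paper's has the advantage of being foolproof if one actually draws the pictures.
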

\begin{proof}
We can prove this by illustrating eight cases with easy so we omit the proof.
\end{proof}

\begin{lemma}
$A1E2$, $A2E1$, $C1D3$, $C2D1$, $C3D2$, $C6D2$ are mathematically the same as the figure of Figure \ref{threecross}. Hence there are six cases that the set of six blades becomes a trivial knot, and two cases that the set of six blades becomes a trefoil.
\end{lemma}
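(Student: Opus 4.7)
The plan is to reduce each of the six named tied six-blades to the common shadow of Figure \ref{threecross} by an explicit sequence of Reidemeister moves, and then to invoke Lemma \ref{lemmafortrefoil}. The first step is bookkeeping: for each of the six configurations I would verify from the matrix presentations of the left and right ends that following the tied blades produces a single closed loop rather than two separate components---this is a direct check against Lemma \ref{samecolumn} and the matrices attached to $A1$, $A2$, $C1$, $C2$, $C3$, $C6$, $D1$, $D2$, $D3$, $E1$, $E2$, and it confirms that each of these six tied six-blades really is a single knot rather than a link.

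The main step is the diagrammatic reduction. For each of $A1E2$, $A2E1$, $C1D3$, $C2D1$, $C3D2$, $C6D2$ I would exhibit a planar isotopy together with Reidemeister moves of Types $I$ and $II$ that removes the superfluous crossings of the original picture, leaving exactly three essential crossings arranged as in Figure \ref{threecross}. The observation driving this is that the extra crossings in each of these six pictures arise either as kinks removable by an RI move or as pairs of crossings between the same two strands removable by RII; the three remaining crossings are then forced in their relative arrangement by the single-cycle combinatorial structure, and in each case fall into the pattern of the central region shown in Figure \ref{central} of Figure \ref{threecross}.

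Once the six shadows have all been identified with the shadow of Figure \ref{threecross}, the conclusion follows directly from Lemma \ref{lemmafortrefoil}: among the $2^{3}=8$ possible over/under assignments at the three crossings, two yield the trefoil of Figure \ref{trefoil} and the remaining six yield the trivial knot, so each of the six tied six-blades contributes six circle outcomes and two trefoil outcomes. The main obstacle is the second step, since no algebraic invariant is being invoked---each of the six reductions has to be carried out by hand, and care is needed to check that the resulting three-crossing shadow really is the one in Figure \ref{threecross} up to ambient planar isotopy, rather than some other three-crossing picture. I would handle this by presenting the six cases as six short explicit sequences of diagrams in the same style as the rest of the article.
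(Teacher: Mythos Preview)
Your overall strategy---show that each of the six shadows coincides with the standard three-crossing shadow of Figure~\ref{threecross} and then invoke Lemma~\ref{lemmafortrefoil}---is correct and is exactly the paper's strategy as well. The execution plan, however, rests on a factual slip: none of these six diagrams has any superfluous crossings to remove. From the crossing counts of the individual ends ($A1,A2$ contribute $0$; each $C_i$ contributes $1$; each $D_i$ contributes $2$; each $E_i$ contributes $3$), every one of $A1E2$, $A2E1$, $C1D3$, $C2D1$, $C3D2$, $C6D2$ already has exactly three crossings. So there is no RI or RII reduction to perform; the entire content of the argument is the planar (spherical) isotopy identifying each three-crossing shadow with the one in Figure~\ref{threecross}.

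There is also a structural reason to avoid Reidemeister moves in this setting. You are working with shadows---the over/under data has not yet been assigned---and the goal is to count the $2^{3}=8$ possible assignments. If a shadow genuinely had four crossings and you removed one, the honest count would be over $2^{4}$ assignments, not $2^{3}$; an RI kink collapses harmlessly because it can be undone for either sign, but an RII pair on a shadow does \emph{not} always lift to an RII move on the diagram (it fails when both crossings have the same strand on top), so an RII reduction at the shadow level would not respect the case count. Sticking to planar isotopy on $S^{2}$, which preserves the set of crossings, avoids this bookkeeping entirely.

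The paper's argument does exactly that, with a shortcut that spares you from drawing six sequences of pictures. It observes that to recognise a connected single-component three-crossing shadow as the one in Figure~\ref{threecross}, it suffices to locate a face bounded by three arcs meeting at the three crossings (the central region of Figure~\ref{central}). For $A1E2$, $C1D3$ (equivalently $C5D1$ by the vertical symmetry already recorded), and $C3D2$ such a triangular face is visible directly in the picture. For $A2E1$, $C2D1$, $C6D2$ one first slides an arc from the left side of the figure around to the right---an isotopy through the point at infinity on $S^{2}$---after which the triangular face appears. This replaces your six hand-drawn reductions by a single combinatorial criterion plus one sphere-isotopy trick applied three times.
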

\begin{proof}
To prove that a tied six-blade is mathematically the same as the figure of Figure \ref{threecross}, we have only to find an area in a tied six-blade that correspond the area 1 of the figure in Figure \ref{central}
.  This area has three points in its boundary.
It is easy to find such an area in $A1E2$, $C5D1$, $C3D2$. For the cases of  $A2E1$,  $C2D1$, $C6D2$, it is not difficult to find an area with  three points in its boundary, and by
moving an arc on the left side to the right you can show that these are mathematically the same as the figure of Figure \ref{threecross}.
Therefore, by Lemma \ref{lemmafortrefoil}, we can finish the proof.
\end{proof}

\begin{lemma}
$(i)$ By a Reidemeister move, $C1E1$ becomes $C1D3$.\\
$(ii)$ By a Reidemeister move, $C3E1$ becomes $C3D2$.\\
$(iii)$ By a Reidemeister move, $C2E2$ becomes $C2D1$.\\
$(ii)$ By a Reidemeister move, $C6E2$ becomes $C6D2$.\\

\end{lemma}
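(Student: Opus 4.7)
The plan is to verify each of the four assertions by exhibiting a Reidemeister move --- in fact a single one in each case --- that transforms one tied six-blade into the other. Since the left end is identical on both sides of every claimed equivalence, the reduction is localized in the right-end portion of the diagram: one needs to show that $E1$ is equivalent to $D3$ when both are attached to $C1$, that $E1$ is equivalent to $D2$ when both are attached to $C3$, and so on for the remaining two cases.

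For part $(i)$ I would first trace the closed curve in $C1E1$ using the pairings of $C1$ ($\{1,2\},\{3,5\},\{4,6\}$) and $E1$ ($\{1,4\},\{2,5\},\{3,6\}$): the traversal $1 \to 4 \to 6 \to 3 \to 5 \to 2 \to 1$ exhibits the diagram as a single closed loop. The analogous traversal for $C1D3$, using $D3$'s pairings $\{1,5\},\{2,4\},\{3,6\}$, yields $1 \to 5 \to 3 \to 6 \to 4 \to 2 \to 1$, so the two diagrams have compatible underlying loop structures. Next I would focus on the pair of chords $\{1,4\}$ and $\{2,5\}$ inside $E1$: these are interleaved and hence cross, whereas in $D3$ the replacement chords $\{1,5\}$ and $\{2,4\}$ are nested and do not cross. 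Because the arc joining blade $2$ to blade $5$ in $E1$ can be swept across the arc joining blade $1$ to blade $4$ without colliding with the remaining chord $\{3,6\}$, a single Reidemeister move removes this crossing and produces $C1D3$. Parts $(ii)$, $(iii)$ and $(iv)$ are handled by the same local picture applied at the corresponding interleaved pair of chords in $E1$ or $E2$, with $C3$, $C2$, or $C6$ playing the role of $C1$.

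The main obstacle is that this verification is inherently figure-based: for each of the four cases one must check that the sweep is actually unobstructed (i.e.\ no third strand is caught inside the bigon about to be collapsed) and that the over/under information at the relevant crossing is consistent with the move. In the spirit of the preceding lemmas in this article, I would discharge this by drawing the intermediate diagram in each case and appealing directly to the reader, rather than by invoking an algebraic knot invariant.
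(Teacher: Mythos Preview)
Your proposal is correct and follows essentially the same approach as the paper: both localize the reduction to the right-end of the diagram and remove a single crossing by one Reidemeister move, deferring the final check to inspection of the figures. Where the paper simply points to the location of the offending crossing in each picture (``upper-right'', ``middle right''), you add the explicit combinatorial bookkeeping --- tracing the closed loop and naming the interleaved chord pair $\{1,4\},\{2,5\}$ whose crossing is the kink to be undone --- which is a helpful elaboration but not a different argument.
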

\begin{proof}
$(i)$ By a Reidemeister move,  we can eliminate the crossing that is  on the upper-right of the figure. Then we get $C1D3$.\\
$(ii)$ By a Reidemeister move, we can eliminate the crossing that is in the middle right of the figure. Then we get $C3D2$.\\
$(iii)$ By a Reidemeister move, we can eliminate the crossing that is in the upper-right of the figure. Then we get $C2D1$.\\
$(iii)$ By a Reidemeister move, we can eliminate the crossing that is in the middle right of the figure.  Here we move the arc on the left to the right side.
 Then we get $C6D2$.
\end{proof}

\begin{figure}[H]
\begin{minipage}[t]{0.24\textwidth}
\begin{center}
\includegraphics[height=2.4cm]{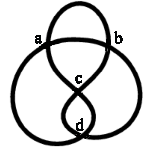}
\caption{eightfigure}
\label{eightfigure}
\end{center}
\end{minipage}
\begin{minipage}[t]{0.24\textwidth}
\begin{center}
\includegraphics[height=2.4cm]{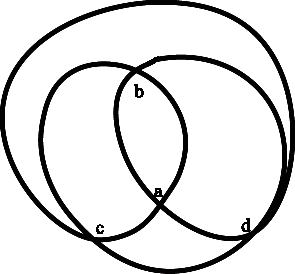}
\caption{eightfigure2}
\label{eightfigure2}
\end{center}
\end{minipage}
\begin{minipage}[t]{0.24\textwidth}
\begin{center}
\includegraphics[height=3.5cm]{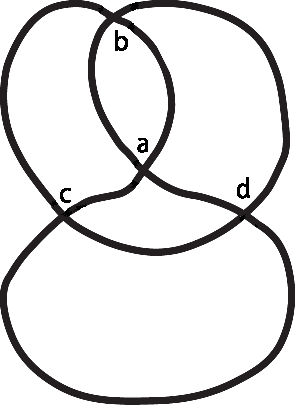}
\caption{eightfigurebb}
\label{eightfigurebb}
\end{center}
\end{minipage}
\begin{minipage}[t]{0.24\textwidth}
\begin{center}
\includegraphics[height=2.4cm]{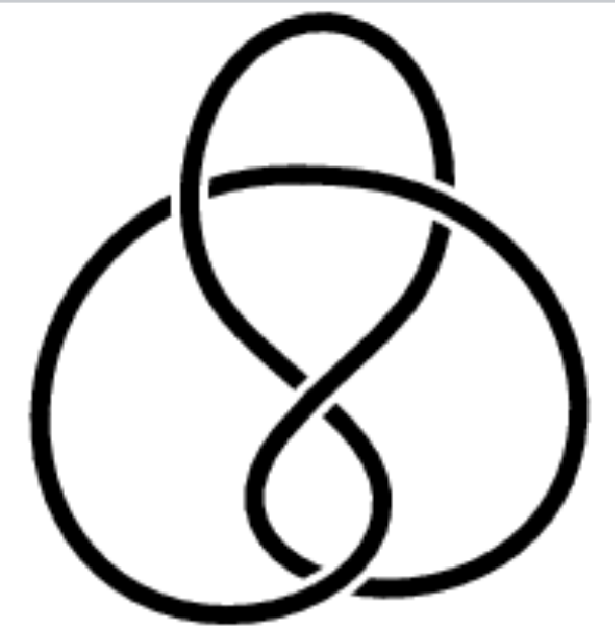}
\caption{eightknot}
\label{eightknot}
\end{center}
\end{minipage}
\end{figure}

\begin{lemma}\label{lemmaforeightfig}
There are 16 cases of relative positions of crossings in Figure \ref{eightfigure}, and two of them are Knots in Figure \ref{trefoil}, and the other sixteen cases are trivial knots, i.e., circles.
\end{lemma}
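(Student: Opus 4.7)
The plan is to mirror the strategy already used for Lemma \ref{lemmafortrefoil}: enumerate all assignments of over/under information at the four crossings in Figure \ref{eightfigure}, simplify each resulting diagram as far as possible, and identify which configurations are equivalent to the figure-eight knot (Figure \ref{eightknot}) and which reduce to a circle. Since there are four crossings and each crossing admits two relative positions, there are $2^4=16$ cases in total, matching the count stated in the lemma.

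First I would set up coordinates by labeling the four crossings of Figure \ref{eightfigure} as $x_1,x_2,x_3,x_4$ and assigning to each a binary variable recording which strand goes over. Before running through all $16$ cases individually, I would exploit the symmetries of the diagram: Figure \ref{eightfigure} has an obvious $180^\circ$-rotational symmetry and a top/bottom flip that pair up the configurations, cutting the essentially distinct cases down to a much smaller list. For each remaining configuration I would then attempt to apply Reidemeister I and II moves at the obvious ``nugatory'' crossings (see Figure \ref{eightfigure2} and Figure \ref{eightfigurebb}, which the authors already introduce for exactly this purpose); whenever such a move succeeds the diagram drops to at most two crossings, and Theorem \ref{cross12} finishes the case as a trivial knot.

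The crucial sub-task is to isolate the two configurations in which no Reidemeister I/II reduction is available. For these two cases I would argue that the diagram is alternating with four crossings and no nugatory crossing, so by the reduced alternating presentation it must be the figure-eight knot $4_1$ shown in Figure \ref{eightknot}; one can equivalently confirm this by computing the knot determinant, which equals $5$ for the figure-eight and $1$ for the unknot. The two configurations correspond to the two global choices of crossing sign (the mirror image of $4_1$ is itself), which is why exactly two of the sixteen cases are knotted.

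The main obstacle I anticipate is purely bookkeeping: making sure that the symmetry reduction is carried out correctly so that no over/under pattern is double-counted or missed, and matching each of the fourteen ``trivial'' patterns to an explicit sequence of Reidemeister moves. Since the authors omit the analogous check in Lemma \ref{lemmafortrefoil} with the remark that it can be done by illustrating the cases, I would likewise present the enumeration as a short table of the sixteen patterns together with the simplifying move used in each non-knotted case, and a single picture verifying that the two remaining patterns are Figure \ref{eightknot}.
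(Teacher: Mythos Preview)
Your proposal is correct and follows essentially the same approach as the paper: the paper's own proof is simply ``We can prove this by illustrating sixteen cases with ease so we omit the proof,'' i.e.\ bare enumeration, and your plan is a careful fleshing-out of exactly that enumeration. Your additional use of the diagram's symmetries and of the reduced-alternating / determinant argument to pin down the two knotted cases as the figure-eight knot goes somewhat beyond what the paper does, but it is sound and in fact clarifies a typo in the lemma statement (the two nontrivial cases should be the figure-eight knot of Figure~\ref{eightknot}, not the trefoil of Figure~\ref{trefoil}, as the subsequent theorem confirms).
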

\begin{proof}
We can prove this by illustrating sixteen cases with easy so we omit the proof.
\end{proof}

\begin{theorem}
$(i)$ $D1D2$,  $E1C2$,  $E2C1$ are mathematically the same as
the figure in $(\ref{eightfigure})$. Hence, for each of them, we have 14 cases in the set of six blades becoming trivial knot, and 2 cases in the set of six blades becoming a Figure-eight knot.\\
$(ii)$ $D1D3$ , $E1C6$ , $E2C3$ are mathematically the same as
the figure in $(\ref{eightfigure})$. Hence, for each of them, we have 14 cases in the set of six blades becoming trivial knot, and 2 cases in the set of six blades becoming a Figure-eight knot.
\end{theorem}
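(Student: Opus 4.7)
The plan is to proceed in direct analogy with the three-crossing case handled above: for each of the six tied six-blades in the statement, exhibit an ambient planar isotopy that carries its underlying shadow onto the four-crossing template of Figure \ref{eightfigure}, and then invoke Lemma \ref{lemmaforeightfig} to conclude that, among the $2^{4}=16$ possible assignments of over/under at the four crossings, exactly $14$ yield the trivial knot and exactly $2$ yield the figure-eight knot.

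The key step is the identification, in each candidate diagram, of a bounded planar region $R$ whose boundary consists of four arcs meeting at four crossing points, matching the central region of Figure \ref{eightfigurebb}. Once such an $R$ is located, the remaining arcs of the diagram lie in the unbounded complementary region and can be slid across crossing-free patches to the standard ``eightfigure'' outline, giving the claimed equivalence of shadows. For $D1D2$ I would take $R$ to be the strip lying between the two knotted ends, into which the four crossings contributed by the matrices of $D1$ and $D2$ already fall. For $E1C2$ and $E2C1$, one first slides a single outer arc from the left side of the diagram to the right (exactly as was done for $A2E1$ and $C2D1$ in the three-crossing lemma) so as to expose the same central region, after which the identification proceeds as in the $D1D2$ case.

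Part $(ii)$ is handled by three further applications of the same procedure: for each of $D1D3$, $E1C6$, $E2C3$ the same kind of bounded region $R$ can be exhibited, again after at most one outer-arc slide. Throughout, no Reidemeister move that alters the crossing count is required, so each verification stays within four crossings and parallels the proof of Lemma \ref{lemmafortrefoil} rather than extending it in any substantive way.

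The main obstacle is pictorial rather than conceptual: for each of the six diagrams one must draw the sequence of slides explicitly and check that the boundary of $R$ really does meet the four crossings in the combinatorial pattern of Figure \ref{eightfigurebb}. As in Lemma \ref{lemmafortrefoil}, this is best done by exhibiting a short sequence of figures case by case rather than by a symbolic argument; once the six reductions are in hand, the numerical conclusion $14+2=16$ is an immediate consequence of Lemma \ref{lemmaforeightfig}.
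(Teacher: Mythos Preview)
Your proposal is correct and follows essentially the same strategy as the paper: identify each of the six shadows with the four-crossing template and then invoke Lemma~\ref{lemmaforeightfig}. The paper's only organizational difference is that for part~$(ii)$ it first matches $D1D3$, $E1C6$, $E2C3$ to an auxiliary template (Figure~\ref{eightfigure2}) via labelled crossing points $a,b,c,e$ and then shows Figure~\ref{eightfigure2} reduces to Figure~\ref{eightfigure} by a single arc slide, rather than treating all six diagrams uniformly as you propose.
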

\begin{proof}
$(i)$ For $D1D2$,  $E1C2$,  $E2C1$, please look at points $a,b,c,e$ in figures \ref{d1d2b}, \ref{e1c2b} and \ref{e2c1b}, and compare them to Figure \ref{eightfigure}.\\
$(ii)$ For $D1D3$ , $E1C6$ , $E2C3$, please look at points $a,b,c,e$ in figures 
\ref{d1d2b}, \ref{e1c2b} and \ref{e2c1b}, and compare them to Figure \ref{eightfigure2}.\\
By moving the arc on the upper side to lower side, we get Figure \ref{eightfigurebb} from Figure \ref{eightfigure2}. It is clear that Figure \ref{eightfigure2} is mathematically the same as 
Figure $(\ref{eightfigure})$.
Therefore, by Lemma \ref{lemmaforeightfig}, we finish the proof.
\end{proof}

\begin{figure}[H]
\begin{minipage}[t]{0.31\textwidth}
\begin{center}
\includegraphics[height=2.5cm]{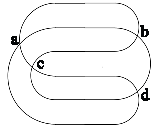}
\caption{D1D2}
\label{d1d2b}
\end{center}
\end{minipage}
\begin{minipage}[t]{0.31\textwidth}
\begin{center}
\includegraphics[height=2.5cm]{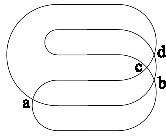}
\caption{C2E1}
\label{e1c2b}
\end{center}
\end{minipage}
\begin{minipage}[t]{0.31\textwidth}
\begin{center}
\includegraphics[height=2.5cm]{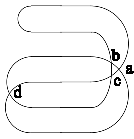}
\caption{C1E2}
\label{e2c1b}
\end{center}
\end{minipage}
\end{figure}

\begin{figure}[H]
\begin{minipage}[t]{0.31\textwidth}
\begin{center}
\includegraphics[height=2.5cm]{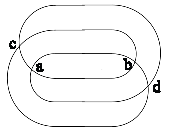}
\caption{D1D3}
\label{d1d3b}
\end{center}
\end{minipage}
\begin{minipage}[t]{0.31\textwidth}
\begin{center}
\includegraphics[height=2.5cm]{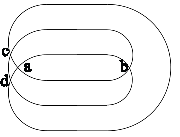}
\caption{C6E1}
\label{e1c6b}
\end{center}
\end{minipage}
\begin{minipage}[t]{0.31\textwidth}
\begin{center}
\includegraphics[height=2.5cm]{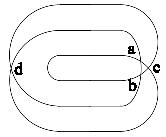}
\caption{C3E2}
\label{e2c3b}
\end{center}
\end{minipage}
\end{figure}

By the lemmas and theorems above, we studied all the cases and learned which tied six-blade is a circle, which is a non-trivial knot.

\bibliographystyle{amsplain}

\begin{thebibliography}{99}
\bibitem{problem} 
A. M. Yaglom and l. M. Yaglom,
CHALLENGING MATHEMATICAL PROBLEMS WITH ELEMENTARY SOLUTIONS
Volume I Combinatorial Analysis and Probability Theory, 
(This book was originally published in the USSR in 1954.)
Translated by James McCawley, Jr.
Revised and edited by Basil Gordon, DOVER PUBLICATIONS, INC.
NEW YORK
\bibitem{knotatlas}K. Morrison and D. Bar-Natan, The Knot Atlas
\url{https://katlas.org/wiki/Main_Page}
\end{thebibliography}

\end{document}